\newtheorem{thm}{Theorem}
\newtheorem{lemma}[thm]{Lemma}
\newtheorem{prop}[thm]{Proposition}
\newtheorem{definition}[thm]{Definition}
\renewcommand{\phi}{\varphi}
\renewcommand{\-}{^{-1}}
\newcommand{\N}{\mathbb{N}}
\title{The braid group injects in the virtual braid group}
\author{Robin Gaudreau}
\begin{document}

\maketitle

\begin{abstract} The virtual braid groups are generalizations of the classical braid groups.
This paper gives an elementary proof that the classical braid group injects into the virtual braid group over the same number of strands.

\end{abstract}


\section{Introduction}

The virtual braid group is obtained from the classical braid group by adding generators called virtual crossings, and relations. It is easy to see how two braid words which contain only classical generators and represent the same element in the classical braid group need to represent the same element of the virtual braid group on the same number of strand. The more complicated question is to show that two braid words over classical generators which represent the same element of the virtual braid group also represent the same element in the classical braid group. This claim can be proved through the representation of braids in the automorphism group of free groups, as first noted by Kamada in \cite{Kam2004} where he relies on the surjection from the virtual braid group to the titular braid-permutation group from \cite{FRR1997}. That technique, while beautiful is far from elementary. This paper proposes a solution which gives an explicit map, the Gaussian projection, from a sequence of virtual moves between classical braid diagrams to a sequence of classical moves between classical braid diagrams. \\

This paper is structured as follows: Section \ref{groups} defines the classical and virtual braid groups, Section \ref{alex} defines almost classical braids and gives the conditions for almost classical braids to be classical braids, Section \ref{parity} constructs the Gaussian projection, and shows that it maps virtual braid diagrams to almost classical braid diagrams, and Section \ref{mainthm} provides a proof that classical braid groups inject in virtual braid groups.

\section{Virtual braid groups and diagrams} \label{groups}

This section contains a review of the definitions of the classical and virtual braid groups. In order to use the tool of Gaussian projection, explored in the next section, it is important to understand braids as diagrams in the plane, a translation to this system from the algebraic interpretation is provided below.

\subsection{Classical braid groups}

Let $n\in \N$. The \emph{classical braid group} on $n$ strands, denoted $B_n$, is the group of words over the elements $\sigma_1, \sigma_2, \ldots , \sigma_{n-1}$ and their formal inverses, subject to the relations:
$$ \mathbf{(U1)} \ \ \forall \ i \textrm{ and } j, \textrm{ if } \ |i-j|>1 \textrm{ then } \sigma_i\sigma_j=\sigma_j\sigma_i, $$
$$ \mathbf{(U2)} \ \ \sigma_i \sigma_i\-= \sigma_i\- \sigma_i=1,$$
$$ \mathbf{(U3)} \ \ \sigma_i\sigma_{i-1}\sigma_i= \sigma_{i-1}\sigma_i\sigma_{i-1},$$
where $i$ and $j$ are in $\{1, 2, \ldots, n-1\}$. Astute readers will notice that U2 follows from the definition of $B_n$ as a group, and its inclusion as a relation is solely for the purpose of introducing the notation.

An element of $B_n$ is called a \emph{braid}, and a finite presentation of this element is called a \emph{braid word}. Geometrically, braid words can be seen as diagrams by assigning to each generating element a crossing as done in Figure \ref{classcross} and stacking them from the bottom to the top as the word is read from left to right. 


\begin{figure}[htbp]
\centering
\def \svgwidth{\textwidth} 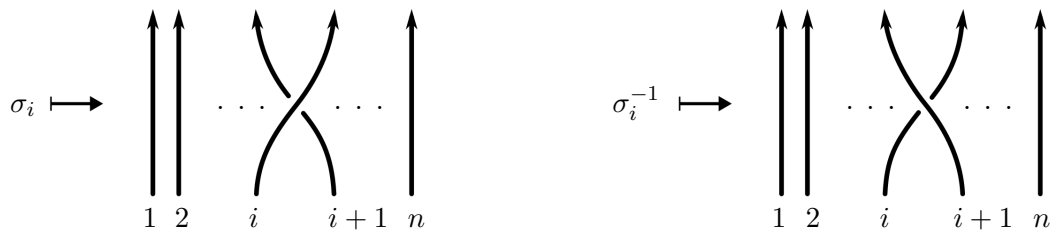  
\caption{The realization of the classical braid group generators as braid diagrams. } \label{classcross}
\end{figure}

The relations on braid words are translated to braid diagram moves by realizing the equivalent words as diagrams. Modifying a braid diagram by replacing a part of it by an equivalent local depiction is called a \emph{classical braid move}, which in this paper are classified to be of either type U2 or U3. See Figure \ref{braidmoves} for examples of a move of type U2 and a move of type U3. Moves of type U1 are ignored as they can be considered to the result of isotopies of the region in which the braid diagram is drawn. In addition to the move given by the U3 relation, all moves obtained from it by mirror symmetry of the diagrams (followed by switching the orientation of the strands to be upwards if needed) are also said to be of that same type. 

\begin{figure}[htbp] 
\centering
\def \svgwidth{\textwidth} 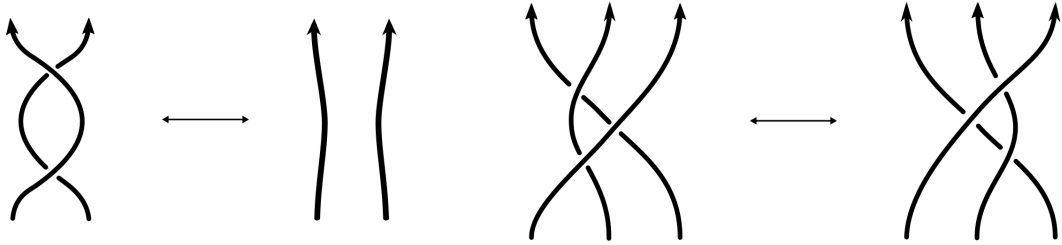  
\caption{Two of the classical braid moves.} \label{braidmoves}
\end{figure}

\subsection{Virtual braid groups}

The \emph{virtual braid group} on $n$ strands, denoted $vB_n$, is a generalization of the classical braid group  first mentioned in \cite{Kau1999} as a topic for further research. The group consists of the words over the elements  $\sigma_1, \sigma_2, \ldots , \sigma_{n-1}, \tau_1, \tau_2, \ldots , \tau_{n-1}$, subject to the relations U1, U2, and U3 above along with the following:
$$\mathbf{(V1)} \ \  \forall \ i \textrm{ and } j, \textrm{ if } \ |i-j|>1 \textrm{ then } \tau_i\tau_j=\tau_j\tau_i,$$
$$\mathbf{(V2)} \ \ \tau_i \tau_i=1,$$
$$\mathbf{(V3)} \ \ \tau_i\tau_{i-1}\tau_i= \tau_{i-1}\tau_i\tau_{i-1},$$ 
$$\mathbf{(V4)}\ \  \tau_i\tau_{i-1}\sigma_i= \sigma_{i-1}\tau_i\tau_{i-1},$$
$$\mathbf{(V5)} \ \   \forall \ j, \ |i-j|>1,\tau_i\sigma_j=\sigma_j\tau_i,$$
where $i$ and $j$ are in $\{1, 2, \ldots, n-1\}$. 

An element of $vB_n$ is called a \emph{virtual braid}, and like the classical braids, it can be represented by diagrams, by adding the representation of the virtual crossings in Figure \ref{virtcross}. 

\begin{figure}[htbp]
\centering
\def \svgwidth{\textwidth} 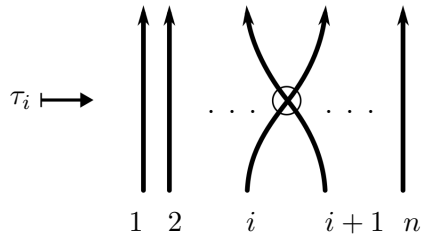  
\caption{The realization of the virtual generators as braid diagrams.} \label{virtcross}
\end{figure}

The relations on virtual braid words which includes virtual crossings (V1, V2, V3, V4 and V5)  can be generalized and unified into an equivalence of virtual braid diagrams called the \emph{virtual detour move} in \cite{Kau1999}. These moves are all invisible to the classical crossings of the diagram and, for reasons that are beyond the scope of this paper, classical crossings encode all of the information of a virtual braid. Therefore, the impact of virtual detour move on the constructions in the sections below is trivial. The denomination ``virtual detour move'' will therefore be used to shorten arguments. For reference, the moves in Figure \ref{detour} are virtual detour moves corresponding to the relations V2 and V4 respectively. 

\begin{figure}[htbp] 
\centering
\def \svgwidth{\textwidth} 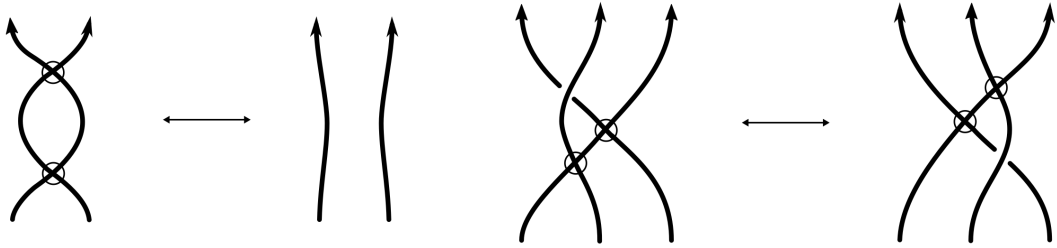  
\caption{Two of the moves on braid diagram involving virtual crossings. } \label{detour}
\end{figure}


\subsection{Inclusion maps}

It is clear that, for each $n=2, 3, \ldots$, there is an inclusion map from the set of classical braid diagrams on $n$ strands to the set of virtual braid diagrams on $n$ strands, defined as the identity of the generators.  
This paper aims to show, in Section \ref{mainthm}, that the maps $\bar v: B_n\to vB_n$, induced by $v$, are injective for all $n\in \N$. 

\section{Alexander numberings} \label{alex}

For a classical oriented link diagram, the \emph{Alexander numbering} of the complement of the diagram is a signed measure of how far a component is from being the region at infinity. The concept has been adapted to other settings by first pushing the numbering from the complement of the diagram onto the diagram itself, which allowed to extend it to virtual oriented link diagrams. This section defines the virtual braid diagrams which have Alexander numbers to be \emph{almost classical} and proves some of their properties. 

\subsection{General definition}
Let $\beta$ be a virtual or classical braid diagram. The \emph{strands} of $\beta$ are the oriented curves running from the bottom to the top of the braid. The boundary of these curves is called the \emph{endpoints} of the braids. The \emph{arcs} of $\beta$ are the connected components of the strands with all the classical crossings removed. Each arc connects either two classical crossings, two endpoints, or an endpoint and a classical crossing, and may contain virtual crossings. 

An \emph{integer numbering} of a classical braid diagram is an assignment of an integer to each arc of the diagram, such that it satisfies, at each crossing, the relation depicted in Figure \ref{crossnum}. If moreover, $\lambda=\mu-1$ at each crossing of the diagram, the numbering is called an \emph{Alexander numbering}. It is sometimes convenient to see the local condition of the Alexander numberings as saying that the arcs are numbered such that the operation seen in Figure \ref{smooth}, called the \emph{oriented smoothing}, is always merging together arcs with equal numbers. 

\begin{figure}[htbp]
\centering
\def \svgwidth{\textwidth} 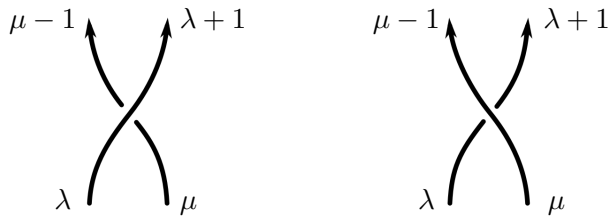  
\caption{Numbering around a crossing.} \label{crossnum}
\end{figure}

\begin{prop} \label{classicalnum}
Any classical braid diagram admits an Alexander numbering with the boundary conditions that the $i$th strand's first arc be numbered $i$, and that the $i$th strand from the left at the top of the braid have its last arc also be numbered $i$. (See Figure \ref{boundnum} for a diagram with the boundary conditions on the bottom of the strands.)
\end{prop}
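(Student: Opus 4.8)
The plan is to write down one explicit numbering and verify it directly: I will number each arc by the horizontal position it occupies.

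Regard the braid diagram as a vertical stack of copies of the elementary pictures of Figure~\ref{classcross}. At every height that avoids the finitely many crossings, the diagram meets the horizontal line in exactly $n$ points; numbering these $1,2,\dots,n$ from left to right, each point sits at a well-defined \emph{position} $k\in\{1,\dots,n\}$. The key observation is that each arc lies at a single position: a strand changes position only where it passes through a classical crossing, and the arcs are by definition what remains after every classical crossing has been deleted, so no arc can contain such a point. Define the numbering by assigning to each arc its position $k$.

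Next I would check the local condition of Figure~\ref{crossnum}. Every crossing of the diagram is a copy of $\sigma_i^{\pm1}$, and the two arcs entering it from below occupy positions $i$ and $i+1$, as do the two arcs leaving it above. Taking $\lambda=i$ and $\mu=i+1$, the bottom-left, bottom-right, top-left and top-right arcs then carry $\lambda$, $\mu$, $\mu-1$ and $\lambda+1$ respectively, which is exactly the pattern required in Figure~\ref{crossnum}; and since $\lambda=\mu-1$ this is in fact an Alexander numbering, not merely an integer numbering. (Equivalently, the oriented smoothing joins each incoming arc to the outgoing arc at the same position, so it always merges arcs carrying equal numbers.) The boundary conditions are then immediate: the $i$th strand leaves the $i$th bottom endpoint, i.e.\ position $i$, so its first arc is numbered $i$; and the $i$th strand from the left at the top ends at the $i$th top endpoint, again position $i$, so its last arc is numbered $i$.

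I do not expect a genuine obstacle here; the one point that needs care is the claim that each arc lies at a single position, which is precisely the assertion that a strand's horizontal position is locally constant away from crossings. Should one prefer not to fix a planar presentation, the same statement follows by induction on the number of crossings: begin with the trivial diagram, in which the $i$th arc is numbered $i$, and each time a crossing $\sigma_i^{\pm1}$ is adjoined at the top of the braid, extend the numbering across it using the rule of Figure~\ref{crossnum} with $\lambda=i$ and $\mu=i+1$; this leaves the bottom condition untouched and re-establishes the top condition, at the modest cost of bookkeeping which strand currently occupies which position.
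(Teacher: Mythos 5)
Your proof is correct, and it takes a slightly different route from the paper's. The paper argues by induction on the number of crossings: it numbers the arcs around the bottommost crossing $\sigma_\lambda^{\pm1}$ using the boundary condition, smooths that crossing as in Figure \ref{smooth}, and repeats. You instead exhibit the numbering in closed form --- each arc is labelled by the horizontal position it occupies --- and verify the local condition of Figure \ref{crossnum} at an arbitrary crossing $\sigma_i^{\pm1}$ by taking $\lambda=i$, $\mu=i+1$. The two arguments produce the same numbering (the induction just reconstructs the position function level by level), but your version has the advantage that both boundary conditions, at the bottom \emph{and} at the top, fall out immediately from the definition rather than being a consequence that the paper has to remark on separately afterward; it also makes well-definedness transparent. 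The one point you rightly flag --- that an arc occupies a single position --- deserves the small caveat that an arc terminates \emph{at} a crossing point, so "position" should be read as the position of the arc outside small disks around the crossings; with that understood, the claim is exactly the local constancy you state. Your closing alternative (adjoining crossings at the top one at a time) is essentially the paper's induction run in the opposite direction, so you have in effect recovered the paper's proof as well.
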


\begin{proof}
Let $\beta$ be a classical braid diagram. Its first crossing from the bottom is $\sigma_\lambda^{\pm 1}$ and with the assigned boundary condition, its arcs are numbered as seen on the appropriate crossing of Figure \ref{smooth}. By smoothing this crossing, the same argument shows that the whole diagram is numberable.
\end{proof}

In fact, this proof can also be used to show that assuming the local conditions of Alexander numbering and the boundary condition on only either the top or the bottom of the braid to be holding implies that the boundary condition on the other end of the braid also holds. 

\begin{figure}[htbp] 
\centering
\def \svgwidth{\textwidth} 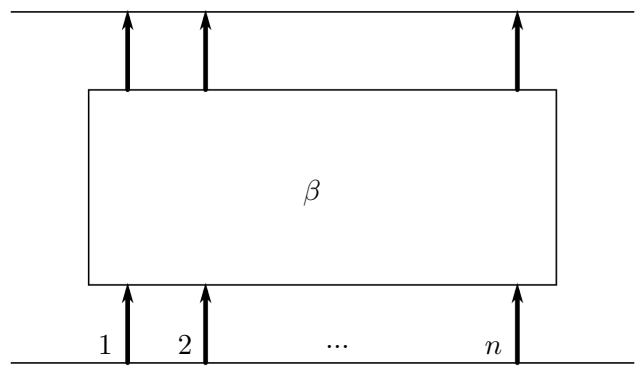  
\caption{Boundary conditions on the Alexander numbering for a braid diagram. } \label{boundnum}
\end{figure}

\subsection{Numbering virtual braid diagrams} 

It is now possible to define integer numberings of virtual braid diagrams, to be an assignment of an integer to each arc of the diagram, where each arc is delimited by classical crossings and by endpoints, and the numbering increases or decreases at each classical crossing as seen in Figure \ref{crossnum}. Again, an integer numbering is called an Alexander numbering if and only if it satisfies the boundary conditions in Figure \ref{boundnum} and the local Alexander numbering condition as seen in Figure \ref{smooth} for each classical crossing.

It is quite simple to find that there are virtual braid words for which the corresponding braid diagram has no Alexander numbering. One such example is seen in Figure \ref{example1}, which depicts a diagram for the word the word $\tau_1\sigma_1\tau_1\sigma_1\in vB_2$. The first classical crossing in the braid satisfies the Alexander numbering condition if and only if $i=j-1$, while the second classical crossing requires that $j+1=i-2$. It is impossible for both of those equations to hold simultaneously. This gap between virtual braid diagrams and virtual braid diagrams which admit Alexander numberings justifies distinguishing between integer and Alexander numberings in the previous subsection. 

\begin{figure}[htbp] 
\centering
\def \svgwidth{\textwidth} 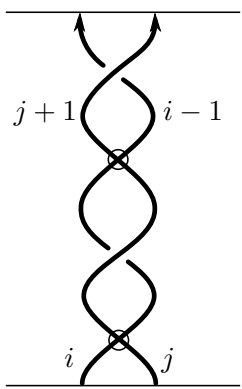  
\caption{A braid which cannot admit an Alexander numbering. } \label{example1}
\end{figure}

Since all classical braid diagrams have Alexander numberings, we say the following:

\begin{definition}
A virtual braid diagram is \emph{almost classical} if it can be given an integer numbering that satisfies the restrictions in Figures \ref{crossnum} with $\lambda=\mu-1$ and \ref{boundnum}.
A virtual braid is almost classical if it is representable by an almost classical virtual braid diagram. 
\end{definition}

\subsection{Almost classical braids}
It is inconvenient that the concatenation of two almost classical virtual braid diagrams may itself not be almost classical. One such example is the diagram representing the virtual braid word $\tau_1\sigma_1\in vB_2$ decomposes into two almost classical braid diagrams, with one (respectively virtual and classical) crossing each. A solution would be to ask for the stronger boundary conditions, where the numbers of the arcs at the top of the braid are also consecutively increasing integers, starting with $1$ on the left. This solution would make those particularly well-numbered virtual braids into a subgroup of the virtual braid group, and satisfy the following lemma:

\begin{lemma} \label{acisu}
Let $\beta$ be a virtual braid diagram on $n$ strands. If $\beta$ is almost classical and the top endpoints of $\beta$ are numbered $1$ through $n$ consecutively from left to right, then all the virtual crossings of $\beta$ can be removed by a finite sequence of virtual detour moves.
\end{lemma}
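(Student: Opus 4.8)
I would argue by induction on the number $r$ of classical crossings of $\beta$. For the base case $r=0$ the diagram $\beta$ is a word in the $\tau_i$ only, so each strand of $\beta$ is a single arc carrying one Alexander number; by the bottom boundary condition the strand issuing from the $i$th bottom endpoint is numbered $i$, and the top boundary condition then forces it to terminate at the $i$th top endpoint, so the underlying permutation of $\beta$ is trivial. Since $\mathbf{(V1)}$, $\mathbf{(V2)}$, $\mathbf{(V3)}$ are exactly the Coxeter relations presenting the symmetric group on the transpositions $\tau_i$, a word in the $\tau_i$ representing the trivial permutation can be carried to the empty word by applying these relations, i.e. by virtual detour moves, and this deletes every virtual crossing.

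For the inductive step let $c=\sigma_k^{\varepsilon}$ be the bottom-most classical crossing of $\beta$. Below $c$ there are only virtual crossings, so the two strands meeting $c$ from below are entire arcs running from bottom endpoints up to $c$; by the local condition of Figure \ref{crossnum} with $\lambda=\mu-1$ they are numbered $\ell$ and $\ell+1$ for some $\ell$, the one numbered $\ell$ being on the left, and by the bottom boundary condition they start at the bottom endpoints in the adjacent positions $\ell$ and $\ell+1$. Regarded as wires of the permutation diagram formed by the virtual crossings below $c$, these two strands are non-inverted (the one starting on the left also finishes on the left), so in any reduced word for that permutation they do not cross; since any two $\tau_i$-words with the same underlying permutation are linked by $\mathbf{(V1)}$, $\mathbf{(V2)}$, $\mathbf{(V3)}$, a finite sequence of virtual detour moves brings $\beta$ to a form in which no virtual crossing below $c$ involves both strands of $c$.

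In that form none of the virtual crossings below $c$ is an obstruction of the forbidden type $\tau_k\sigma_k$: a virtual crossing disjoint from both strands of $c$ commutes past $c$ by $\mathbf{(V5)}$, and a virtual crossing sharing exactly one strand with $c$ is slid past using the consequences of $\mathbf{(V4)}$, namely $\sigma_i^{\varepsilon}=\tau_{i-1}\tau_i\,\sigma_{i-1}^{\varepsilon}\,\tau_i\tau_{i-1}$ and its mirror image, each of which moves a classical crossing below an adjacent virtual crossing at the cost of shifting the crossing by one strand and altering the virtual tangle. Iterating, $c$ is pushed to the bottom of the diagram, where, since its only interaction was with the two strands now occupying positions $\ell$ and $\ell+1$, it has become $\sigma_\ell^{\varepsilon}$. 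Thus virtual detour moves carry $\beta$ to a diagram of the form $\sigma_\ell^{\varepsilon}\beta_1$, where $\beta_1$ has exactly $r-1$ classical crossings. Virtual detour moves preserve the existence of an Alexander numbering (they only reroute arcs, each carrying a single number) and fix the endpoints, so $\sigma_\ell^{\varepsilon}\beta_1$ is almost classical with top endpoints numbered $1,\dots,n$; moreover, reading the numbering of $\sigma_\ell^{\varepsilon}\beta_1$ just above the crossing $\sigma_\ell^{\varepsilon}$ and using Figure \ref{crossnum} with $\lambda=\mu-1$, the bottom endpoints of $\beta_1$ are numbered $1,\dots,n$ as well, so $\beta_1$ meets the hypotheses of the lemma. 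The inductive hypothesis applied to $\beta_1$ then removes all its virtual crossings, hence all those of $\beta$.

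The delicate point — which I expect to be the main obstacle — is the push of $c$ to the bottom: one must be sure that the only local obstruction to moving a classical crossing past virtual crossings is the forbidden configuration $\tau_k\sigma_k$, which the reduced-word normalisation has eliminated, and that the process terminates. I would make this precise either by applying the relations $\mathbf{(V4)}$, $\mathbf{(V5)}$ in a fixed order while tracking the position and height of $c$, or, more conceptually, by appealing directly to the freedom of the virtual detour move to reroute the arcs incident to $c$ and so drag the crossing down to the bottom of the braid.
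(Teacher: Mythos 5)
Your proof is correct in outline but follows a genuinely different route from the paper's. The paper also inducts on the number of classical crossings, but its inductive step is the oriented smoothing of Figure \ref{smooth}: since every crossing of an almost classical diagram is even, smoothing one merges arcs carrying equal numbers, so the smoothed diagram is still almost classical with the same boundary numbering, and the detour-move sequence trivialising it also trivialises the original diagram because detour moves never interact with classical crossings. You instead isolate the bottom-most classical crossing $c$, observe that its two lower arcs emanate from the adjacent bottom endpoints $\ell,\ell+1$ and are non-inverted, normalise the virtual tangle below $c$ to a reduced word so that these two wires never cross, and then drag $c$ to the bottom to peel off a factor $\sigma_\ell^{\varepsilon}$. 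What your version buys is an explicit classical normal form $\sigma_{\ell_1}^{\varepsilon_1}\cdots\sigma_{\ell_r}^{\varepsilon_r}$, read off one crossing at a time; what it costs is exactly the step you flag: the identity $\sigma_i^{\varepsilon}=\tau_{i-1}\tau_i\,\sigma_{i-1}^{\varepsilon}\,\tau_i\tau_{i-1}$ rewrites the crossing in place rather than strictly lowering it in the word, so termination of the push is not immediate from word length. It does follow if you measure progress by the number of virtual crossings lying on the two lower arcs of $c$ (each genuine application of $\mathbf{(V4)}$, after detouring a third strand so that its two intersections with the band become adjacent, lowers this count by two), or if you invoke the detour move in the same black-box fashion the paper does throughout. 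Either way the argument closes; the paper's smoothing trick simply sidesteps this bookkeeping entirely.
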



\begin{figure}[htbp] 
\centering
\def \svgwidth{\textwidth} 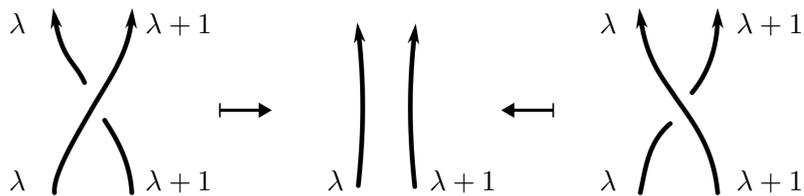  
\caption{The Alexander numbering of crossings and their smoothing.} \label{smooth}
\end{figure}

Lemma \ref{acisu} can be restated to say that almost classical virtual braids with a numbering that satisfies the boundary condition on the top and the bottom are in the image of $\bar v: B_n \to vB_n$. This justifies the laxer boundary condition and the inconvenience of almost classical braids on $n$ strands not being a subgroup of $vB_n$. 

\begin{proof}
If $\beta$ has no classical crossings, the only restriction on the Alexander numbering comes from the endpoints of each strand. The definition dictates that the $i$th strand of the diagram be numbered $i$ at the top and the bottom, for each $i=1,2, \ldots, n$. Since $\beta$ contains no classical crossings, each strand consists of a single arc, and the numbering shows that the underlying permutation is trivial. Therefore, such a $\beta$ is equivalent to the identity braid. 

Now, assume that the lemma holds for any braid diagram with $k$ classical crossings for some natural number number $k$. Let $\beta$ be an almost classical braid diagram with $k+1$ classical crossings. Then, the diagram $\beta_0$ obtained by smoothing any of the classical crossings of $\beta$ as in Figure \ref{smooth} is also almost classical and has $k$ classical crossings. By our assumption, $\beta_0$ is related to a classical braid diagram by a finite sequence of virtual detour moves, and by the definition of those moves, the same sequence takes $\beta$ to a classical braid diagram. 
\end{proof}

\section{Parity projection} \label{parity}

This section defines a map from virtual braids to almost classical braids, which will be used to prove the main theorem of this paper.

\subsection{Gaussian parity}
In general, a virtual braid diagram is not Alexander numberable. Its arcs can still be numbered by using the rule that the $i$th strand starts with number $i$, and that the numbering changes by $+1$ or $-1$ at each classical crossing as depicted in Figure \ref{crossnum}. The result is again called an integer numbering of the dia. For some crossings, the numbering will still respect the Alexander numbering rule that the incoming and outgoing arcs on each side of the crossing bear the same number. In general, they will not. Following the convention established in \cite{IMN2013} say that a crossing in a braid is \emph{even} if and only if the integer numbering of the arcs respects the Alexander condition around that crossing. Otherwise, the crossing is called \emph{odd}. The property that a crossing be odd or even is its \emph{parity}. 

For example, consider the two crossings in Figure \ref{proofrm2}, considered to be portions of the same, larger braid diagram. Both crossings are even whenever $\lambda= \mu-1$, and both are odd otherwise. Moreover, observe that the integer numbering of the rest of the diagram is unchanged by the local move.

\begin{figure}[htbp]
\centering
\def \svgwidth{\textwidth} 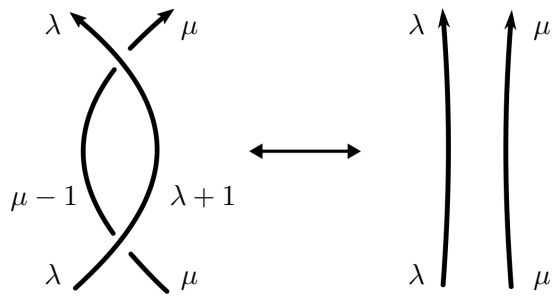  
\caption{The Alexander numbering around a canceling pair of classical crossings in a braid diagram. } \label{proofrm2}
\end{figure}

In Figure \ref{proofrm3}, the crossings on the right and the left version can be identified by the numbering of the strands at the bottom of the picture. The $\gamma/\lambda$ crossing is even if and only if $\gamma= \lambda-1$ (on the left picture, the condition is equivalent, and reads $\gamma+1= \lambda$). Similarly, the parity of the $\gamma/\mu$ and the $\lambda/\mu$ crossings is unchanged by the move, and by looking at the labels at the top of the picture, it still holds that the integer numbering of the rest of the picture is unchanged by the local move. The more interesting property that can be computed in this situation is that if any two of the crossings pictured are even, so is the third one. That is, if $\gamma= \lambda-1$ and $\gamma+ 1= \mu-1$, then the $\lambda/\mu$ crossing is also even. Similarly for the other two pairs of crossings. 

\hfill

\begin{figure}[htbp] 
\centering
\def \svgwidth{\textwidth} 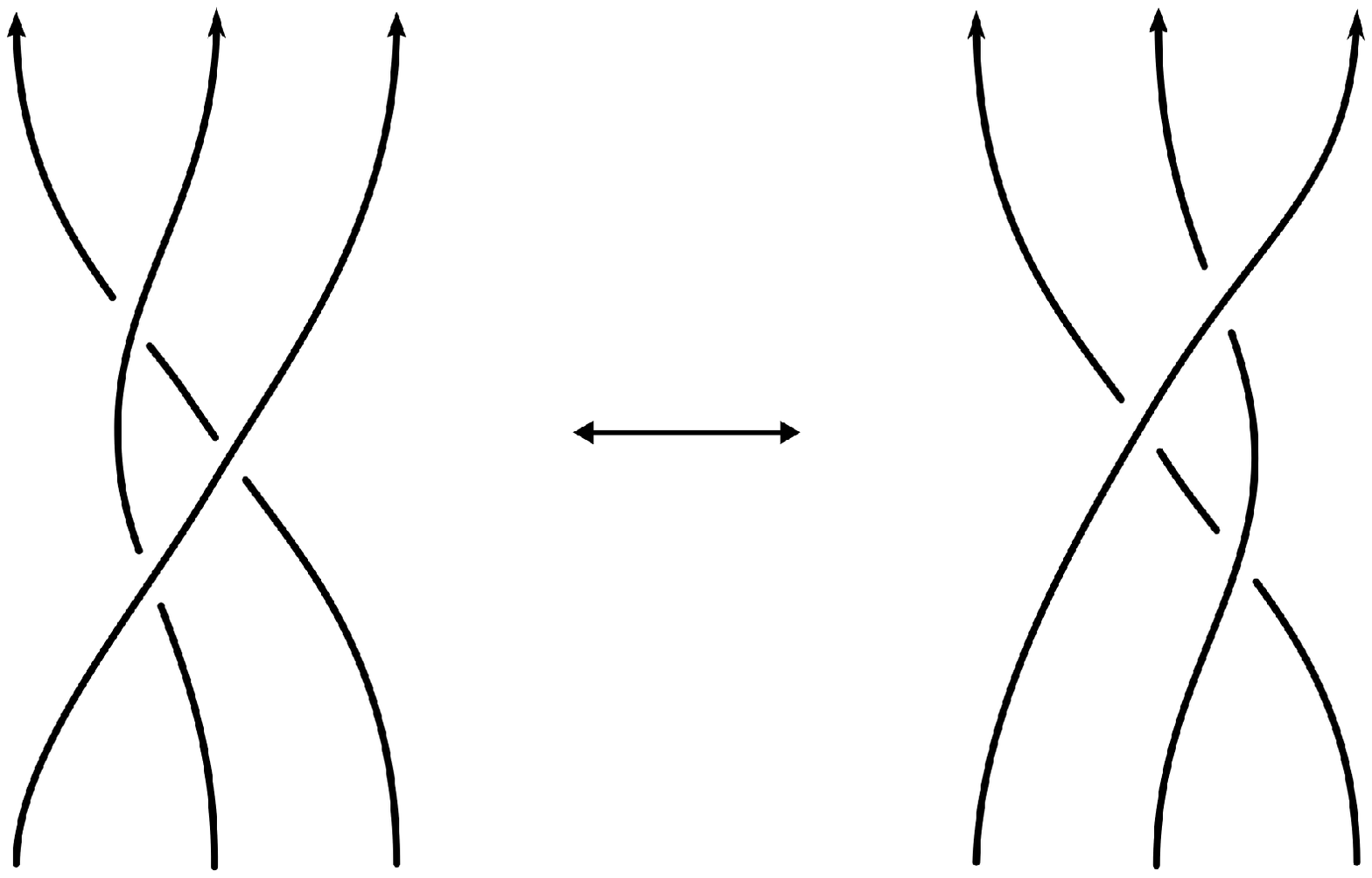  
\caption{The Alexander numbering around three crossings before and after a move on a braid diagram. } \label{proofrm3}
\end{figure}

\subsection{Projection map}

Define the \emph{Gaussian projection} of a virtual braid diagram $\beta$ to be $\phi(\beta)$, the braid diagram obtained by turning all the odd crossings of $\beta$ into virtual crossings, computing the parity of the remaining classical crossings with the new arcs, turning any new odd crossings virtual, and repeating this process until the resulting braid diagram is almost classical. 

\begin{lemma} \label{lem2}
Let $\beta$ and $\beta'$ be equivalent virtual braid diagrams. Then, $\phi(\beta)$ and $\phi(\beta')$ are equivalent almost classical braid diagrams.
\end{lemma}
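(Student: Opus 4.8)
The plan is to reduce the statement to the generators of the equivalence relation on virtual braid diagrams --- classical braid moves of type U2 and of type U3 (together with the mirror variants of U3), and virtual detour moves (which subsume V1--V5) --- and then to check, for a single move $\beta \rightsquigarrow \beta'$, that $\phi(\beta)$ and $\phi(\beta')$ are equivalent; that $\phi(\beta)$ and $\phi(\beta')$ are themselves almost classical is immediate from the definition of $\phi$. First I would record two preliminaries. One: $\phi$ is well defined on an individual diagram, since the number of classical crossings strictly decreases at each stage that virtualises a crossing (so the process halts), and the set of odd crossings at each stage is determined by the integer numbering, hence by the diagram, so there is no choice to make. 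Two: a virtual detour move commutes with $\phi$, because it changes neither the classical crossings, nor the incidences of arcs with classical crossings, nor the integer numbering; hence $\phi(\beta')$ is obtained from $\phi(\beta)$ by a virtual detour move (now possibly routing past more virtual strands), so in particular $\phi(\beta)\sim\phi(\beta')$.

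The two substantive cases are U2 and U3, which I would handle by running the defining process of $\phi$ on $\beta$ and on $\beta'$ simultaneously and verifying that the two runs stay synchronised outside the region of the move. For a U2 move, $\beta'$ is $\beta$ with a canceling pair $\sigma_i^{\pm1}\sigma_i^{\mp1}$ inserted on an arc. Figure~\ref{proofrm2} shows that, for every integer numbering, the two crossings of this pair have the same parity and that the numbering of the rest of the diagram is unchanged. Hence at every stage of the $\phi$-process the pair is either (i) entirely classical, in which case the numbering everywhere else agrees with the corresponding stage of the run on $\beta$, or (ii) entirely virtual, in which case it is two consecutive $\tau_i$'s and can be deleted by a V2 detour move, after which the run again agrees with that on $\beta$. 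Therefore $\phi(\beta')$ differs from $\phi(\beta)$ only by a leftover even canceling pair (removed by a U2 move) and/or virtual detour moves, so $\phi(\beta)\sim\phi(\beta')$.

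For a U3 move I would use Figure~\ref{proofrm3}, which supplies three local facts: the parity of each of the three crossings is preserved by the move, the numbering outside the region is unchanged, and among the three crossings one can never have exactly two even ones. Consequently, at every stage of the $\phi$-process the odd crossings among the three form a set of size $0$, $2$, or $3$, and --- since the numbering, and hence the parity, of those three crossings is the same in the run on $\beta$ as in the run on $\beta'$ --- it is the same set in both runs. After virtualising that set, the two diagrams carry the same partly-classical, partly-virtual triple near the move; in the ``size $0$'' case the two sides are still related by a U3 move, and in the ``size $2$'' and ``size $3$'' cases they are related by a virtual detour move (realising one of V4, V5, V3, or a mirror variant). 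Thus at the end $\phi(\beta')$ is obtained from $\phi(\beta)$ by a single such move, so $\phi(\beta)\sim\phi(\beta')$, and composing the single-move comparisons along any sequence of moves relating $\beta$ and $\beta'$ gives the lemma.

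The main obstacle is exactly the iterative nature of $\phi$: virtualising one batch of odd crossings can create new odd crossings, and one must ensure the two parallel runs never drift apart. The computations in Figures~\ref{proofrm2} and~\ref{proofrm3} are precisely what rules this out: the parity identities they record (``the inserted canceling pair has matched parity''; ``no two-even configuration occurs among the triple'') hold for \emph{every} integer numbering, so they persist through all iterations regardless of what happens elsewhere in the diagram, and each move is invisible to the numbering of everything outside its support. Some care is also needed to confirm that each of the partly-virtualised U3 configurations is indeed an instance of a detour move, but this is a finite check against the list V3--V5 and their mirrors.
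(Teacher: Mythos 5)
Your proposal is correct and follows essentially the same route as the paper: reduce to a single move, observe that detour moves commute with $\phi$, and use the parity computations of Figures \ref{proofrm2} and \ref{proofrm3} to show that the crossings involved in a U2 or U3 move are virtualised consistently on both sides. You are in fact somewhat more careful than the paper about the termination and iteration of the $\phi$-process and about keeping the two runs synchronised, which is a welcome sharpening rather than a departure.
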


\begin{proof} 
First note that the assertion that $\phi(\beta)$ and $\phi(\beta')$ are almost classical pure braid diagrams follows trivially from the definition of the map $\phi$.
Now, if $\beta'$ is obtained from $\beta$ by a virtual detour move, there is a one-to-one correspondence between the classical crossings of each diagram, and the parity of the crossings is not impacted by virtual detour moves. Thus, $\phi(\beta')$ is obtained from $\phi(\beta)$ by a virtual detour move. 

Now, assume that $\beta$ and $\beta'$ differ by only one classical braid move. Considering first a move of type U2, where $\beta$ is the diagram with more crossings. Then, those crossings are either both preserved by $\phi$, or both made virtual by $\phi$. In either case, the diagram $\phi(\beta')$ can be obtained from $\phi(\beta)$. An example or this situation is shown in Figure \ref{mainfig}. 

Now, if $\beta'$ is obtained from doing a move of type U3 to $\beta$, there are three possible situation. The crossings involved in the move can be all odd, all even, or only one of the three could be even. Again, the number of odd and even crossings in $\beta$ and $\beta'$ is equal, and, should exactly one of them be even, it is the crossing between the corresponding stands in each diagram. Then, $\phi(\beta')$ can be obtained from $\phi(\beta)$ by applying either a V3, a U3, or a virtual detour move. See Figure \ref{mainfig} for an example.

It suffices to apply the arguments above to each move in a sequence to obtain the general statement.\end{proof}

In other terms, Lemma \ref{lem2} states that the map of braid diagrams, $\phi$, factors to a map of virtual braids, $\bar \phi: vB_n \to vB_n$, and that a braid is almost classical if and only if it is in the image of $\bar \phi$. Notice that almost classical braids are a proper subset of $vB_n$ for each $n$, but that they do not form subgroups. 

\begin{figure}[htbp] \centering
\centering
\def \svgwidth{\textwidth} 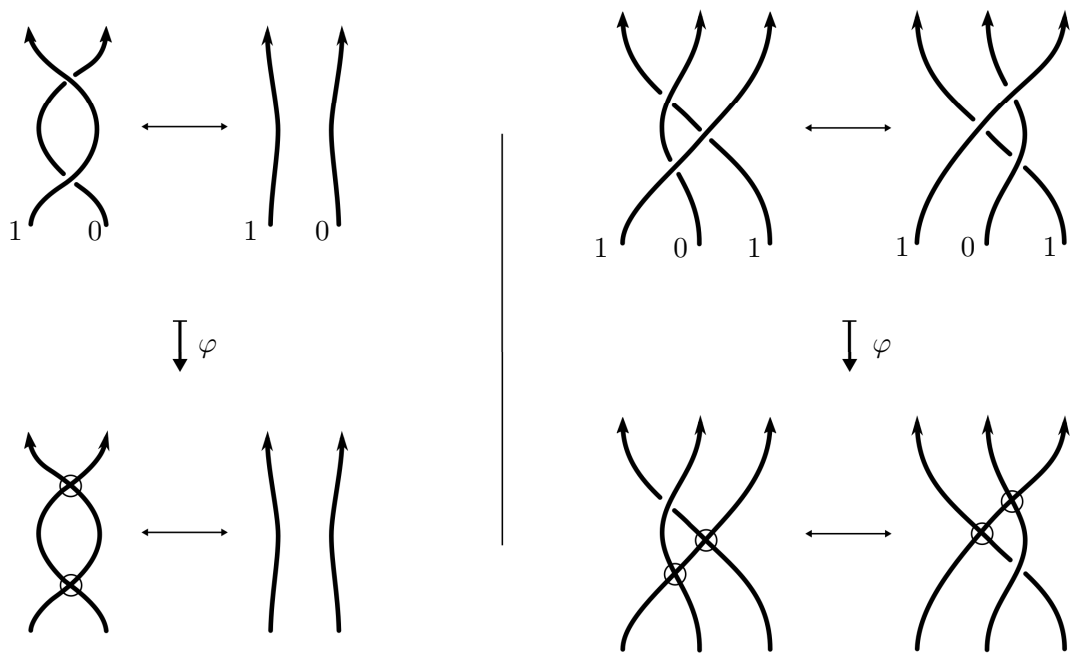  
\caption{The result of the Gaussian projection map on some braid moves.} \label{mainfig}
\end{figure}




\section{Main theorem} \label{mainthm}

\begin{thm} For each $n=2, 3, \ldots$, the map $\bar v: B_n \to vB_n$ is injective.
\end{thm}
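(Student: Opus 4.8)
The plan is to push an equality in $vB_n$ down to $B_n$ by feeding a sequence of virtual braid moves through the Gaussian projection and then using Lemma \ref{acisu} to return to purely classical diagrams. Fix $n\ge 2$ and suppose $\alpha,\alpha'\in B_n$ satisfy $\bar v(\alpha)=\bar v(\alpha')$. Choose classical braid diagrams $D$ and $D'$ representing $\alpha$ and $\alpha'$; then $v(D)$ and $v(D')$ represent the same element of $vB_n$, so they are joined by a finite sequence of moves of type U2, U3 and virtual detour. By Proposition \ref{classicalnum} a classical braid diagram is already almost classical, hence has no odd crossings, hence is fixed by $\phi$; in particular $\phi$ fixes $v(D)$ and $v(D')$. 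Applying $\phi$ to every diagram in the sequence and invoking Lemma \ref{lem2} (concretely, the case analysis in its proof), we obtain a finite sequence of almost classical braid diagrams $v(D)=\epsilon_0,\epsilon_1,\dots,\epsilon_k=v(D')$ in which each $\epsilon_j$ and $\epsilon_{j+1}$ differ by a single move of type U2, U3 or virtual detour.

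The next step is to see that the integer numbering is constant along this sequence: a move of type U2 or U3 leaves the integer labels outside its local picture untouched and matches the labels on its boundary (Figures \ref{proofrm2} and \ref{proofrm3}), and a virtual detour move changes no integer labels at all. Since $\epsilon_0=v(D)$ is a classical braid diagram, the remark following Proposition \ref{classicalnum} forces its top arcs to be labelled $1,\dots,n$ from left to right, so the same holds for every $\epsilon_j$. Thus each $\epsilon_j$ is an almost classical braid diagram whose top and bottom endpoints carry the labelling $1,\dots,n$, and Lemma \ref{acisu} applies to it: $\epsilon_j$ is carried to a classical braid diagram $\gamma_j$ by a finite sequence of virtual detour moves, with $\gamma_0=D$ and $\gamma_k=D'$. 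Splicing these detour sequences into the original one shows that $D$ and $D'$ are joined by a finite sequence of moves of type U2, U3 and virtual detour in which, between successive classical diagrams $\gamma_j$ and $\gamma_{j+1}$, at most one move is classical.

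It remains to show that such a sequence between two classical braid diagrams forces $\alpha=\alpha'$ in $B_n$, which is the step I expect to be the real obstacle. A virtual detour move neither creates, destroys nor reorders classical crossings and does not change which pair of strands meets, or which is the overstrand, at any classical crossing; so $\gamma_j$ and $\gamma_{j+1}$ carry the same ordered crossing data whenever the intervening move is a detour move, and otherwise that data changes exactly as the relation U2 (deleting a cancelling adjacent pair) or U3 (the local substitution) prescribes. Since a classical braid diagram is recovered as a braid word by reading its crossings from bottom to top — at each crossing the two strands involved occupy adjacent positions, which pins down the generator — this ordered crossing data determines the element of $B_n$ represented, and the data of $D$ differs from that of $D'$ only by operations realized by the defining relations of $B_n$. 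Hence $\alpha=\alpha'$. The delicate part is making this last bookkeeping precise: verifying that the detour-invariant crossing data transforms under the U2 and U3 moves exactly by the corresponding relations, and that it genuinely recovers the braid word of a classical diagram.
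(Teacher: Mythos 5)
Your proof follows the paper's argument essentially verbatim: project the connecting sequence of virtual diagrams through $\phi$, observe via Lemma \ref{lem2} that the projected diagrams are almost classical and, by the invariance of the integer numbering under the moves, that their top labels remain $1,\dots,n$, then apply Lemma \ref{acisu} to strip the virtual crossings and recover a sequence of classical braid moves from $D$ to $D'$. If anything, you are more explicit than the paper about the final bookkeeping (splicing out the detour sequences and checking that consecutive classical diagrams differ exactly by a defining relation of $B_n$), a step the paper's own proof leaves implicit.
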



\begin{proof} 
Fix $n$ to be an integer greater than 1, and let $\beta$ and $\beta'$ be classical braid diagrams representing the same element in $vB_n$. To prove that $v$ is injective, it suffices to show that $\beta$ and $\beta'$ represent the same element in the classical pure braid group $B_n$. 
Let $\{\beta=\beta_0, \beta_1, \ldots , \beta_k=\beta'\}$ be a sequence of virtual braid diagrams such that $\beta_{i-1}$ and $\beta_i$  for each $i=1,\ldots k$ differ by exactly one classical braid move and possibly a virtual detour move (defined in the discussion of Figure \ref{detour}). Then, the sequence $\{\phi(\beta_0)= \beta, \phi(\beta_1), \ldots , \phi(\beta_k)=\beta'\}$ consists of almost classical braid diagrams, and applying Lemma \ref{lem2}, $\phi(\beta_i)$ is obtained from $\phi(\beta_{i-1})$ by virtual detour moves and at most one classical braid move.

Since $\beta$ is classical, the numbering at the top $\phi(\beta_1)$ is consecutively increasing from 1. Then by Lemma \ref{acisu}, the virtual crossings $\phi(\beta_1)$  can be removed by virtual detour moves, to obtain a classical braid diagram. This last argument can be applied to each projected diagram in turn, yielding an explicit sequence of classical braid diagram moves relating $\beta$ and $\beta'$. 
\end{proof}

A part of the proof technique in action is illustrated by a rather artificial example in Figure \ref{example}. Say that the diagram on the left, corresponding to $\tau_1\sigma_2\sigma_2\-\tau_1\in vB_3$ appears at some point in a sequence of virtual braid diagrams. Then, the projection maps it to the almost classical diagram of the trivial braid in $vB_3$ seen on the right of the figure.

\begin{figure}[htbp] \centering
\centering
\def \svgwidth{\textwidth} 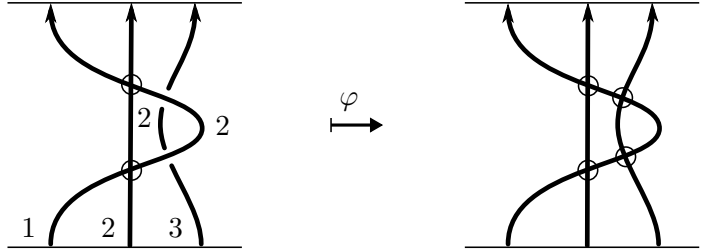  
\caption{Example of a trivial braid projecting to a trivial almost classical braid.} \label{example}
\end{figure}


\newpage


\begin{thebibliography}{9}


\bibitem[FRR1997]{FRR1997} R. Fenn, R. Rim\'{a}nyi and C. Rourke, \emph{The braid-permutation group}. Topology. Bol. 36, no. 1, pp.123--135, 1997.

\bibitem[IMN2013]{IMN2013} D. P. Ilyutko, V. O. Manturov, and I. M. Nikonov, \emph{Parity in knot theory and graph-links}. Journal of Mathematical Sciences, vol. 193, no. 6, 2013.

\bibitem[Kam2004]{Kam2004} S. Kamada, \emph{Invariants of virtual braids and a remark on left stabilizations and virtual exchange moves}. Kobe J. Math., vol. 21, pp. 33--49, 2004.

\bibitem[Kau1999]{Kau1999} L. H. Kauffman, \emph{Virtual knot theory}. European J. Combin. Vol. 20 (1999), no. 7, 663--690.

\end{thebibliography}
\end{document}